\tikzstyle{vertex}=[circle, draw, inner sep=0pt, minimum size=6pt]
\definecolor{verylight}{gray}{0.97}
\definecolor{light}{gray}{0.9}
\definecolor{medium}{gray}{0.85}
\definecolor{dark}{gray}{0.6}
\def\frk{\frak}               % font for "Fraktur"
\def\Phi{{\frk n}}
\def\Phi{{\frk N}}
\def\opn#1#2{\def#1{\operatorname{#2}}} % to make operators
\opn\chara{char} \opn\length{\ell} \opn\pd{pd} \opn\rk{rk}
\opn\projdim{proj\,dim} \opn\injdim{inj\,dim} \opn\rank{rank}
\opn\depth{depth} \opn\grade{grade} \opn\height{height}
\opn\embdim{emb\,dim} \opn\codim{codim}
\opn\Tr{Tr} \opn\bigrank{big\,rank}
\opn\superheight{superheight}\opn\lcm{lcm}
\opn\trdeg{tr\,deg}%\emph{
\opn\reg{reg} \opn\lreg{lreg} \opn\ini{in} \opn\lpd{lpd}
\opn\size{size}\opn\bigsize{bigsize}
\opn\cosize{cosize}\opn\bigcosize{bigcosize}
\opn\sdepth{sdepth}\opn\sreg{sreg}
\opn\link{link}\opn\fdepth{fdepth}
\opn\deg{deg}
\opn\max{max}
\opn\indeg{indeg}
\opn\min{min}
\opn\psln{psln}
\opn\div{div} \opn\Div{Div} \opn\cl{cl} \opn\Cl{Cl}
\let\epsilon\varepsilon
\let\phi=\varphi
\let\kappa=\varkappa
\opn\Spec{Spec} \opn\Supp{Supp} \opn\supp{supp} \opn\Sing{Sing}
\opn\Ass{Ass} \opn\Min{Min}\opn\Mon{Mon} \opn\dstab{dstab} \opn\astab{astab}
\opn\Syz{Syz}
\opn\Ann{Ann} \opn\Rad{Rad} \opn\Soc{Soc}
\opn\Im{Im}
 \opn\Ind{Ind}
 \opn\del{del}
 \opn\Ker{Ker} \opn\Coker{Coker} \opn\Am{Am}
\opn\Hom{Hom} \opn\Tor{Tor} \opn\Ext{Ext} \opn\End{End}
\opn\Aut{Aut} \opn\id{id}
\opn\nat{nat}
\opn\pff{pf}%   \pf exists already
\opn\Pf{Pf} \opn\GL{GL} \opn\SL{SL} \opn\mod{mod} \opn\ord{ord}
\opn\Gin{Gin} \opn\Hilb{Hilb}\opn\sort{sort}
\opn\initial{init}
\opn\ende{end}
\opn\height{height}
\opn\bight{bight}
\opn\hte{ht}
\opn\indeg{indeg}
\opn\reg{reg}
\opn\depth{depth}
\opn\type{type}
\opn\ldim{ldim}
\opn\maxdeg{maxdeg}
\opn\aff{aff} \opn\con{conv} \opn\relint{relint} \opn\st{st}
\opn\lk{lk} \opn\cn{cn} \opn\core{core} \opn\vol{vol}
\opn\link{link} \opn\star{star}\opn\lex{lex}
\opn\gr{gr}
\def\pot#1#2{#1[\kern-0.28ex[#2]\kern-0.28ex]}
\opn\dirlim{\underrightarrow{\lim}}
\opn\inivlim{\underleftarrow{\lim}}
\def\Implies{\ifmmode\Longrightarrow \else
        \unskip${}\Longrightarrow{}$\ignorespaces\fi}
\def\implies{\ifmmode\Rightarrow \else
        \unskip${}\Rightarrow{}$\ignorespaces\fi}
\def\iff{\ifmmode\Longleftrightarrow \else
        \unskip${}\Longleftrightarrow{}$\ignorespaces\fi}
 \theoremstyle{plain}
\newtheorem{Theorem}{Theorem}[section]
 \newtheorem{Lemma}[Theorem]{Lemma}
 \newtheorem{Corollary}[Theorem]{Corollary}
 \newtheorem{Proposition}[Theorem]{Proposition}
 \theoremstyle{definition}
 \newtheorem{Definition}[Theorem]{Definition}
 \newtheorem{Example}[Theorem]{Example}
\let\epsilon\varepsilon
\let\kappa=\varkappa
\def\qed{\ifhmode\textqed\fi
      \ifmmode\ifinner\quad\qedsymbol\else\dispqed\fi\fi}
\def\textqed{\unskip\nobreak\penalty50
       \hskip2em\hbox{}\nobreak\hfil\qedsymbol
       \parfillskip=0pt \finalhyphendemerits=0}
\def\dispqed{\rlap{\qquad\qedsymbol}}
\opn\dis{dis}
\def\pnt{{\raise0.5mm\hbox{\large\bf.}}}
\opn\Lex{Lex}
\begin{document}
 \title{A remark on sequentially Cohen-Macaulay monomial ideals}

 \author {Mozhgan Koolani and Amir Mafi*}

\address{Mozghan Koolani, Department of Mathematics, University of Kurdistan, P.O. Box: 416, Sanandaj,
Iran.}
\email{mozhgan.koolani@gmail.com}

\address{Amir Mafi, Department of Mathematics, University of Kurdistan, P.O. Box: 416, Sanandaj,
Iran.}
\email{a\_mafi@ipm.ir}

\subjclass[2010]{13C14,13F55, 05E45.}

\keywords{Sequentially Cohen-Macaulay, Monomial ideals, Shedding vertex.\\
* Corresponding author}

\begin{abstract}
Let $R=K[x_1,\ldots,x_n]$ be the polynomial ring in $n$ variables over a field $K$.
We show that if $G$ is a connected graph with a basic $5$-cycle $C$, then $G$ is a sequentially Cohen-Macaulay graph if and only if there exists a shedding vertex $x$ of $C$ such that $G\setminus x$ and $G\setminus N[x]$ are sequentially Cohen-Macaulay graphs. Furthermore, we study the sequentially Cohen-Macaulay and Castelnuovo-Mumford regularity of square-free monomial ideals in some special cases.
\end{abstract}

\maketitle

\section*{Introduction}
Throughout this paper, we assume that $R=K[x_1,\ldots,x_n]$ is the polynomial ring in $n$ variables over a field $K$ and $I$ is a monomial ideal of $R$.
If $I$ is the square-free monomial ideal of $R$, we may consider the simplicial complex $\Delta$ over vertex set $V=\{x_1,\ldots,x_n\}$ for which $I=I_{\Delta}$ is the Stanley-Reisner ideal of $\Delta$ and $K[\Delta]=R/I_{\Delta}$ is the Stanley-Reisner ring. Note that the simplicial complex $\Delta$ on $V$ is a collection of subsets of $V$ such that: $(1)$ $\{x_i\}\in\Delta$ for $i=1,\ldots,n$, and $(2)$ if $A\in\Delta$ and $B\subseteq A$, then $B\in\Delta$. If $x$ is a vertex of the simplicial complex $\Delta$, then the {\it deletion} of $x$ from $\Delta$, denoted by $\del_{\Delta}(x)$, is the simplicial complex over the vertex set $V\setminus\{x\}$ with faces
$\{F: F\in \Delta, x\notin F\}$. The {\it link} of $x$ in $\Delta$, denoted by $\link_{\Delta}(x)$, is the subcomplex of $\del_{\Delta}(x)$ with faces
$\{F: F\in\del_{\Delta}(x), F\cup\{x\}\in\Delta\}$. It is clear that $I_{\del_{\Delta}(x)}=(I_{\Delta},x)$ and $I_{\link_{\Delta}(x)}=((I_{\Delta}:x),x)$.
We say a monomial ideal $I$ is Cohen-Macaulay (sequentially Cohen-Macaulay) when $R/I$ is Cohen-Macaulay (sequentially Cohen-Macaulay).
Stanley \cite{S} defined that a graded $R$-module $M$ is to be sequentially Cohen-Macaulay (i.e., SCM) if there exists a finite filtration of graded $R$-modules
$0=M_0\subset M_1\subset\cdots\subset M_r=M$ such that each $M_{i}/M_{i-1}$ is Cohen-Macaulay (i.e., CM) and the Krull dimension of the quotients are increasing: $\dim(M_1/M_0)<\dim(M_2/M_1)<\cdots<\dim(M_r/M_{r-1})$.
Note that every CM $R$-module is a SCM $R$-module. Moreover, it is known that $M$ is a CM $R$-module if and only if $M$ is an unmixed and a SCM $R$-module.

 Let $G$ be a simple graph (no loops or multiple edges) on the vertex set $V=\{x_1,\ldots,x_n\}$ and the edge set $E$. The {\it edge ideal} of the graph $G$ is the quadratic square-free monomial ideal $I(G)=(x_ix_j\mid \{x_i,x_j\}\in E)$ and it was first introduced by Villarreal \cite{Vi}. One can associated to $G$ the simplicial complex $\Delta_G$ called the {\it independence complex}, whose faces are the independent sets of the graph $G$. Note that the independent set in $G$ is the set with no two of its vertices are adjacent. The independence complex is the simplicial complex associated to $I(G)$ via the Stanley-Reisner correspondence. Hence we may consider the simplicial complex $\Delta_G$ for which $I(G)$ is the Stanley-Reisner ideal of $\Delta_G$. The graph $G$ is called SCM if $R/I(G)$ is SCM. The SCM of simplicial complexes and graphs are studied in \cite{AMS, D,FV,HH,HRW,Van,VV,W}.

In this paper we show that if $G$ is a connected graph with a basic $5$-cycle $C$, then $G$ is a SCM graph if and only if there exists a shedding vertex $x$ of $C$ such that $G\setminus x$ and $G\setminus N[x]$ are SCM graphs, where $N[x]=N(x)\cup\{x\}$ such that $N(x)$ is the neighborhoods set of $x$ and $G\setminus x$ is the induced subgraph of $G$ over the vertex set $V\setminus\{x\}$. Moreover, we study SCM and Castelnuovo-Mumford regularity of square-free monomial ideals in some special cases.

For any unexplained notion or terminology, we refer the reader to \cite{HH1} and \cite{V}.

\section{The results}
We start this section by recalling the following definition and theorem.
Suppose that $\Delta$ be a simplicial complex. The pure $i$-skeleton of $\Delta$ is defined as: $\Delta^{[i]}=\langle\{F\in\Delta:\dim(F)=i\}\rangle; -1\leq i\leq \dim(\Delta)$.

\begin{Theorem}\cite[Theorem 3.3]{D}\label{T0} A simplicial complex $\Delta$ is SCM if and only if the pure $i$-skeleton $\Delta^{[i]}$ is CM for all $-1\leq i\leq\dim(\Delta)$.
\end{Theorem}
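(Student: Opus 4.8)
The plan is to reduce the statement to the general characterization of sequentially Cohen--Macaulay (SCM) modules via their \emph{dimension filtration}, and then to identify that filtration combinatorially for $K[\Delta]$. Recall that for a finitely generated graded module $M$ one lets $D_j(M)$ be the largest graded submodule of $M$ of Krull dimension at most $j$; these fit into the dimension filtration $0=D_{-1}(M)\subseteq D_0(M)\subseteq\cdots\subseteq D_{\dim M}(M)=M$, and the relevant general fact (due to Schenzel) is that $M$ is SCM if and only if every nonzero quotient $D_j(M)/D_{j-1}(M)$ is Cohen--Macaulay, necessarily of dimension $j$. Thus the theorem will follow once the dimension filtration of $K[\Delta]$ is matched with the pure skeletons $\Delta^{[i]}$.

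First I would compute the dimension filtration of $K[\Delta]$ explicitly. Since $I_\Delta=\bigcap_{F}P_F$ with $P_F=(x_\ell:\ell\notin F)$ ranging over the facets $F$ of $\Delta$ and $\dim R/P_F=\dim F+1$, a primary--decomposition argument shows $D_{j}(K[\Delta])=I_{\Gamma_j}/I_\Delta$, where $\Gamma_j$ is the subcomplex generated by the facets of $\Delta$ of dimension at least $j$. Consequently the filtration quotient is $D_{i+1}/D_{i}=I_{\Gamma_{i+1}}/I_{\Gamma_i}$, a module of dimension $i+1$, and at the top one gets the clean identity $D_{\dim M}/D_{\dim M-1}=K[\Delta^{[\dim\Delta]}]$, which already suggests the correspondence $D_{i+1}/D_i\leftrightarrow\Delta^{[i]}$ for every $i$.

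The heart of the argument, and the step I expect to be the main obstacle, is to prove that for each $i$ the quotient $D_{i+1}/D_i$ is Cohen--Macaulay \emph{if and only if} the pure skeleton $\Delta^{[i]}$ is Cohen--Macaulay, even though these two modules of common dimension $i+1$ are not equal in general: the skeleton $\Delta^{[i]}$ contains every $i$-face of $\Delta$, whereas $\Gamma_{i+1}$ and $\Gamma_i$ only record facets by dimension. To control this I would pass to local cohomology and Hochster's formula, expressing $H^{j}_{\mm}$ of each side through the reduced simplicial homology $\tilde H_\bullet(\link_\Delta W;K)$ of links. The bridge is the combinatorial identity $\link_{\Delta^{[i]}}(F)=(\link_\Delta F)^{[\,i-|F|\,]}$, valid for every face $F\in\Delta^{[i]}$, which reduces Reisner's criterion for $\Delta^{[i]}$ to the vanishing of $\tilde H_j\big((\link_\Delta F)^{[m]};K\big)$ for $j<m$. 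Matching this range--restricted vanishing with the local cohomology of $D_{i+1}/D_i$ is the delicate bookkeeping on which the proof turns.

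Finally, assembling the pieces: invoking Reisner's criterion throughout, $\Delta^{[i]}$ is Cohen--Macaulay for all $-1\le i\le\dim\Delta$ if and only if every quotient $D_{i+1}/D_i$ of the dimension filtration of $K[\Delta]$ is Cohen--Macaulay, which by the Schenzel characterization is exactly the condition that $K[\Delta]$, and hence $\Delta$, is SCM. The empty-face and low-dimensional edge cases (such as $i=-1$ and isolated vertices) are handled directly, since the corresponding skeletons and filtration quotients are either zero or trivially Cohen--Macaulay.
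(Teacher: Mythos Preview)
The paper does not supply a proof of this statement at all: Theorem~\ref{T0} is quoted verbatim from Duval \cite[Theorem~3.3]{D} and used as a black box, so there is no in-paper argument to compare your proposal against.

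For what it is worth, your outline via Schenzel's dimension--filtration characterization is a legitimate route to the result and is essentially the approach taken in later expositions (e.g.\ Herzog--Sbarra, Herzog--Hibi); Duval's original proof in \cite{D} proceeds differently, through algebraic shifting and a comparison of $h$-triangles. The one place where your sketch is genuinely incomplete is the step you yourself flag: the equivalence ``$D_{i+1}/D_i$ is Cohen--Macaulay $\iff$ $\Delta^{[i]}$ is Cohen--Macaulay.'' The combinatorial identity $\link_{\Delta^{[i]}}(F)=(\link_\Delta F)^{[\,i-|F|\,]}$ is correct and does reduce Reisner's criterion for $\Delta^{[i]}$ to range-restricted vanishing of homology of links in $\Delta$, but you still need to identify the local cohomology of $I_{\Gamma_{i+1}}/I_{\Gamma_i}$ with exactly that same data; this is usually done by writing $I_{\Gamma_{i+1}}/I_{\Gamma_i}$ as a direct sum of (shifted) copies of $K[\link_\Delta F]$ over the $i$-dimensional facets, or by a careful long-exact-sequence and Hochster computation. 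Until that identification is made precise the argument is a plan rather than a proof, but the plan is sound.
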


\begin{Proposition}\label{P0}
Let $I$ be a monomial ideal and $u$ be a monomial element of $R$. If $I$ is SCM, then $(I:u)$ is SCM.
\end{Proposition}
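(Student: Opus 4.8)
The plan is to reduce to the case of a single variable and then exploit the $\mathbb{Z}^n$-graded multiplication map. First I would observe that if $u=x_{i_1}\cdots x_{i_s}$ (with repetitions allowed), then $(I:u)=((I:x_{i_1}):x_{i_2}\cdots x_{i_s})$, so by induction on $\deg u$ it suffices to prove the statement for a single variable $u=x_i$: once $(I:x_i)$ is known to be SCM, applying the single-variable case repeatedly to the SCM ideals produced at each stage yields the general monomial $u$.

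So assume $u=x_i$ and set $M=R/I$. The key structural remark is that multiplication by $x_i$ is a $\mathbb{Z}^n$-graded endomorphism $\cdot x_i\colon M\to M$ whose image is $x_iM$ and whose kernel is $(0:_M x_i)=(I:x_i)/I$. Hence there are graded isomorphisms $R/(I:x_i)\cong M/(0:_M x_i)\cong x_iM$, the last one being a shift by $e_i$. Thus $R/(I:x_i)$ is realized simultaneously as a quotient of $M$ and as the graded submodule $x_iM\subseteq M$, and the goal becomes to show this module is SCM.

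The engine is the dimension-filtration characterization of sequential Cohen--Macaulayness: a finitely generated graded module is SCM if and only if each successive quotient $D_j/D_{j-1}$ of its dimension filtration $D_0\subseteq D_1\subseteq\cdots$ (where $D_j$ is the largest graded submodule of dimension $\le j$) is either $0$ or CM of dimension $j$. Since $M$ is SCM, every $D_j(M)/D_{j-1}(M)$ is $0$ or CM of dimension $j$. I would transport this filtration along $\cdot x_i$: the images $N_j:=x_i D_j(M)$ form a filtration of $x_iM$, and the surjections $D_j(M)\twoheadrightarrow N_j$ induce surjections $D_j(M)/D_{j-1}(M)\twoheadrightarrow N_j/N_{j-1}$, exhibiting each subquotient of $x_iM$ as a graded homomorphic image of a CM module. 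For the submodule picture one uses instead that $D_j(x_iM)=x_iM\cap D_j(M)$, together with the monotonicity $\dim R/(I:x^{b})\le\dim R/(I:x^{b-e_i})$, which controls how the dimension filtration of $M$ interacts with division by $x_i$.

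The main obstacle is precisely this last step: a homomorphic image (or a submodule) of a Cohen--Macaulay module need not be Cohen--Macaulay, so it is \emph{not} enough to know that each subquotient maps onto, or embeds in, a CM module of the correct dimension. To close the gap I would use the extra rigidity available here. Because $(0:_M x_i)$ is annihilated by $x_i$, it is a module over the polynomial ring $\bar R=R/(x_i)$ in $n-1$ variables, and change of rings identifies its $R$-deficiency modules with its $\bar R$-deficiency modules; combined with the long exact sequence of $\Ext^{\bullet}_R(-,R)$ arising from $0\to(0:_M x_i)\to M\to x_iM\to 0$ and Schenzel's criterion (SCM is equivalent to $\Ext^{n-j}_R(-,R)$ being $0$ or CM of dimension $j$ for every $j$), this reduces the CM-ness of the subquotients of $x_iM$ to the known behaviour of $M$, via induction on $n$. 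In the square-free case one can argue more directly, since $(I:x_i)$ together with $x_i$ is the Stanley--Reisner ideal of a link, and one may appeal to Theorem \ref{T0} by checking that each pure skeleton of the relevant complex is CM.
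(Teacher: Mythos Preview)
Your main line of argument has a genuine gap at exactly the point you flag. You correctly observe that the surjection $D_j(M)/D_{j-1}(M)\twoheadrightarrow N_j/N_{j-1}$ does not by itself force the target to be Cohen--Macaulay, and you try to repair this with Schenzel's criterion and the long exact sequence of $\Ext_R^{\bullet}(-,R)$ attached to $0\to(0:_M x_i)\to M\to x_iM\to 0$, appealing to ``induction on $n$''. But there is nothing to induct on: the module $(0:_M x_i)=(I:x_i)/I$ is in general \emph{not} of the form $\bar R/J$ for a monomial ideal $J\subseteq\bar R$, so no inductive hypothesis about monomial ideals in fewer variables applies to it. Even if one knew that $(0:_M x_i)$ were SCM, the long exact sequence would only sandwich each $\Ext^j_R(x_iM,R)$ between CM modules of the right dimension, which is still not enough to conclude CM-ness. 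So the module-theoretic route, as written, does not close.

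The paper's proof is essentially your final throwaway sentence, carried out in full and preceded by one crucial reduction you omit. First one \emph{polarizes}: by \cite[Proposition~4.11]{F} sequential Cohen--Macaulayness is preserved and reflected under polarization, so one may assume both $I$ and $u$ are square-free. Then $I=I_\Delta$ for a simplicial complex $\Delta$, and Duval's Theorem~\ref{T0} says $\Delta^{[i]}$ is CM for every $i$. Since links of CM complexes are CM, $\link_{\Delta^{[i]}}(u)$ is CM; and because taking links commutes with taking pure skeleta (up to a shift in the index), this says exactly that every pure skeleton of $\link_\Delta(u)$ is CM, whence $\link_\Delta(u)$ is SCM by Theorem~\ref{T0} again. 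As $I_{\link_\Delta(u)}=((I:u),u)$ and the variables in $u$ are free in $R/(I:u)$, this yields that $(I:u)$ is SCM. Compared with your module-theoretic attempt, this route avoids the delicate Ext bookkeeping entirely: the CM-preservation step is the classical ``links of CM are CM'', and the compatibility of link with pure skeleton is the combinatorial substitute for controlling the dimension filtration under the colon.
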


\begin{proof}
By via polarization \cite[Proposition 4.11]{F}, we may assume that $I$ and $u$ are square-free. Now we assume that $\Delta$ is a simplicial complex of $I$. Since $I$ is SCM, by Theorem \ref{T0} we have $\Delta^{[i]}$ is CM for all $i$ and so by \cite[Proposition 6.3.15]{V} $\link_{\Delta^{[i]}}(u)$ is CM. By using \cite[Proposition 6.3.17]{V} we have $\link_{\Delta^{[i]}}(u)=(\link_{\Delta})^{[i-1]}(u)$ and it follows that $(\link_{\Delta})^{[i-1]}(u)$ is CM for all $i>1$. Hence by Theorem \ref{T0}  $\link_{\Delta}(u)$ is SCM. Since $\link_{\Delta}(u)=((I:u),u)$, it follows that $(I:u)$ is SCM.
\end{proof}

Recall that an ideal $I$ is called unmixed if all prime ideal of $\Ass(I)$ have the same height. A vertex $x$ of a simplicial complex $\Delta$ is called a shedding vertex when no face of $\link_{\Delta}(x)$ is a facet (maximal face) of $\del_{\Delta}(x)$.
\begin{Proposition}\label{P1}
Let $\Delta$ be a simplicial complex and $x$ be a shedding vertex. If $I_{\Delta}=I$ is unmixed, then $(I:x)$ and $(I,x)$ are unmixed. In particular, $I_{\link_{\Delta}(x)}$ and $I_{\del_{\Delta}(x)}$ are unmixed.
\end{Proposition}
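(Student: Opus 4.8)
The plan is to translate everything into the Stanley--Reisner dictionary and reduce the two unmixedness claims to the purity of the associated complexes. Since $I=I_{\Delta}$ is square-free it is radical, so $\Ass(R/I)=\Min(R/I)$, and the minimal primes are exactly the $P_F=(x_i : x_i\notin F)$ as $F$ runs over the facets of $\Delta$, with $\hte P_F=n-|F|$. Hence $I$ is unmixed precisely when $\Delta$ is pure; write $d$ for the common cardinality of the facets. The same correspondence applies to $(I,x)$ and $((I:x),x)$, which by the identities recorded in the introduction are $I_{\del_{\Delta}(x)}$ and $I_{\link_{\Delta}(x)}$. So it suffices to prove that $\del_{\Delta}(x)$, $\link_{\Delta}(x)$, and the complex $\Delta'$ with $I_{\Delta'}=(I:x)$ are all pure.

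First I would treat the link. A face $G$ of $\link_{\Delta}(x)$ is a facet exactly when $G\cup\{x\}$ is a facet of $\Delta$ containing $x$; since every facet of $\Delta$ has $d$ vertices, every facet of $\link_{\Delta}(x)$ has $d-1$ vertices, so $\link_{\Delta}(x)$ is pure and $((I:x),x)=I_{\link_{\Delta}(x)}$ is unmixed. This step uses only that $\Delta$ is pure, not the shedding hypothesis. Next, for $(I:x)$ itself I would identify $\Delta'$: using that $I$ is radical one gets $(I:x^2)=(I:x)$, whence $\del_{\Delta'}(x)=\link_{\Delta'}(x)=\link_{\Delta}(x)$, that is, $x$ is a cone point of $\Delta'$ and $\Delta'=\{x\}*\link_{\Delta}(x)$. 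Its facets are the $G\cup\{x\}$ with $G$ a facet of $\link_{\Delta}(x)$, all of cardinality $d$, so $\Delta'$ is pure and $(I:x)$ is unmixed. (Equivalently, the facets of $\Delta'$ are precisely the facets of $\Delta$ passing through $x$.)

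Finally, the deletion is where the shedding hypothesis does the real work, and I expect this to be the main obstacle: in general $\del_{\Delta}(x)$ can acquire ``small'' maximal faces and fail to be pure even when $\Delta$ is pure. The key claim is that under shedding the facets of $\del_{\Delta}(x)$ are exactly the facets of $\Delta$ not containing $x$. To see this, suppose $F$ is a facet of $\del_{\Delta}(x)$ that is not a facet of $\Delta$; then $F\subsetneq F'$ for some $F'\in\Delta$, and $F'$ must contain $x$ (otherwise $F'$ would be a face of $\del_{\Delta}(x)$ properly containing $F$). Hence $F\cup\{x\}\subseteq F'\in\Delta$ gives $F\in\link_{\Delta}(x)$, so $F$ is a face of the link that is a facet of the deletion, contradicting shedding. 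Thus every facet of $\del_{\Delta}(x)$ is a facet of $\Delta$ avoiding $x$, of cardinality $d$, so $\del_{\Delta}(x)$ is pure and $(I,x)=I_{\del_{\Delta}(x)}$ is unmixed. Combined with the link computation, this gives the ``in particular'' statement. The only point needing a little care is the degenerate possibility that every facet of $\Delta$ contains $x$; but then $\del_{\Delta}(x)=\link_{\Delta}(x)$ and any facet of the deletion would again violate shedding, so this case cannot occur under the hypotheses.
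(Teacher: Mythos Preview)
Your proof is correct, but it follows a different route from the paper's.

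The paper argues algebraically: from the short exact sequence
\[
0\longrightarrow R/(I:x)\overset{x}{\longrightarrow} R/I\longrightarrow R/(I,x)\longrightarrow 0
\]
one obtains $\Ass(I:x)\subseteq\Ass(I)$, and then the shedding hypothesis is invoked via the cited result \cite[Proposition~2.1]{JY} to get $\Ass(I,x)\subseteq\Ass(I)$. Since $I$ is unmixed, every prime in $\Ass(I)$ has the same height, and the two containments finish the job immediately. Your approach instead works entirely on the combinatorial side: you translate unmixedness to purity, show directly that $\link_{\Delta}(x)$ and the cone $\{x\}*\link_{\Delta}(x)$ are pure whenever $\Delta$ is (no shedding needed here), and then use the shedding condition only to rule out ``short'' facets in $\del_{\Delta}(x)$. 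This last step is in fact a self-contained proof of the cited Jafari--Yazdan Pour fact that $\Ass(I,x)\subseteq\Ass(I)$ for a shedding vertex, phrased as ``every facet of $\del_{\Delta}(x)$ is a facet of $\Delta$.'' So your argument is more elementary and makes transparent exactly where the shedding hypothesis enters (only for the deletion), at the cost of being somewhat longer; the paper's argument is terser but leans on an external reference for the key inclusion.
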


\begin{proof}
From the exact sequence \[0\longrightarrow R/(I:x)\overset{x}\longrightarrow R/I\longrightarrow R/(I,x)\longrightarrow 0,\]
 we conclude that $\Ass(I:x)\subseteq\Ass(I)\subseteq\Ass(I:x)\cup\Ass(I,x)$. 
Since $x$ is a shedding vertex, by using \cite[Proposition 2.1]{JY} we have $\Ass(I,x)\subseteq\Ass(I)$ and hence $\Ass(I)=\Ass(I:x)\cup\Ass(I,x)$. Now, since $I$ is unmixed it therefore follows that $(I:x)$ and $(I,x)$ are unmixed.
Since $I_{\del_{\Delta}(x)}=(I_{\Delta},x)$ and $I_{\link_{\Delta}(x)}=((I_{\Delta}:x),x)$, the result is clear.
\end{proof}

In the next result we use the following definition; a $5$-cycle $C$ of $G$ is called basic if $C$ does not contain two adjacent vertices of degree three or more in $G$, see \cite{CC}.

\begin{Lemma}\cite[Lemma 38]{CC}\label{L0}
Let $G$ be a connected graph. Then every vertex of degree at least $3$ in a basic $5$-cycle is a shedding vertex.
\end{Lemma}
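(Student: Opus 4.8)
The plan is to first translate the simplicial notion of shedding vertex into a purely graph-theoretic extension condition, and then exploit the hypothesis that $C$ is basic.

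\emph{Step 1 (reformulation).} For the independence complex $\Delta_G$ one has $\del_{\Delta_G}(x)=\Delta_{G\setminus x}$ and $\link_{\Delta_G}(x)=\Delta_{G\setminus N[x]}$, which is exactly what the identities $I_{\del_\Delta(x)}=(I_\Delta,x)$ and $I_{\link_\Delta(x)}=((I_\Delta:x),x)$ from the introduction encode. Thus a face of $\link_{\Delta_G}(x)$ is an independent set $F$ of $G$ with $F\cap N[x]=\emptyset$, while a facet of $\del_{\Delta_G}(x)$ is a maximal independent set of $G\setminus x$. Hence $x$ is a shedding vertex precisely when no independent set $F$ with $F\cap N[x]=\emptyset$ is maximal in $G\setminus x$; equivalently, every such $F$ can be enlarged by some vertex $y\neq x$ with $F\cup\{y\}$ still independent.

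\emph{Step 2 (using basicness).} Write $C\colon x_1-x_2-x_3-x_4-x_5-x_1$ and let $x=x_1$ satisfy $\deg_G(x_1)\ge 3$. Since $C$ is basic, it contains no two adjacent vertices of degree $\ge 3$; as $x_1$ has degree $\ge 3$, its two cycle-neighbours $x_2$ and $x_5$ must have degree exactly $2$. Consequently $N_G(x_2)=\{x_1,x_3\}$ and $N_G(x_5)=\{x_1,x_4\}$, a fact I will use to decide when $x_2$ or $x_5$ may be appended to $F$.

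\emph{Step 3 (the extension).} Fix an independent set $F$ with $F\cap N[x_1]=\emptyset$; in particular $x_2,x_5\notin F$. If $x_3\notin F$, then the only possible neighbours of $x_2$, namely $x_1$ and $x_3$, both lie outside $F$, so $F\cup\{x_2\}$ is independent. Likewise, if $x_4\notin F$, then $F\cup\{x_5\}$ is independent. Because $x_3x_4$ is an edge of $C$, the set $F$ cannot contain both $x_3$ and $x_4$, so at least one of these two cases occurs and $F$ extends via a neighbour of $x_1$. By Step 1 this shows $x_1$ is a shedding vertex.

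I expect the only delicate point to be Step 1, the careful identification of faces of the link and facets of the deletion with independent sets of $G\setminus N[x]$ and maximal independent sets of $G\setminus x$; once this dictionary is in place the combinatorics is short. Note that possible chords at $x_1$ (extra edges placing $x_3$ or $x_4$ into $N[x_1]$) cause no trouble: they only force $x_3$ or $x_4$ out of $F$, which makes the relevant extension available. The genuine role of the basic hypothesis is to guarantee $\deg_G(x_2)=\deg_G(x_5)=2$: without it a cycle-neighbour of $x_1$ could be adjacent to a vertex of $F$ and block the extension.
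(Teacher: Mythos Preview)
Your proof is correct. Note, however, that the paper itself does not supply a proof of this lemma: it is quoted directly from \cite[Lemma~38]{CC} and used as a black box in the proof of Theorem~\ref{T1}. Your argument is the natural one and matches the standard approach: the reformulation in Step~1 is precisely the usual description of a shedding vertex of an independence complex (every independent set avoiding $N[x]$ admits a neighbour of $x$ that can be appended), and Steps~2--3 use basicness to force $\deg_G(x_2)=\deg_G(x_5)=2$, so that whichever of $x_3,x_4$ is absent from $F$ makes the corresponding vertex $x_2$ or $x_5$ available for the extension. Your remark that chords at $x_1$ only help is accurate, and the connectedness hypothesis on $G$ is not actually used in the argument.
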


Suppose that $x$ is a vertex of graph $G$. Then it is clear that $\link_{\Delta}(x)=\Delta_{G\setminus N[x]}$ and $\del_{\Delta}(x)=\Delta_{G\setminus x}$.

The following theorem is a generalization of \cite[Theorem 40]{CC}.
\begin{Theorem}\label{T1}
Let $G$ be a connected graph with a basic $5$-cycle $C$. Then $G$ is a SCM graph if and only if there exists a shedding vertex $x\in V(C)$ such that $G\setminus x$ and $G\setminus N[x]$ are SCM graphs.
\end{Theorem}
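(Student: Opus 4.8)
The plan is to pass from the graph to its independence complex $\Delta=\Delta_G$ and to use throughout the two identities $\del_{\Delta}(x)=\Delta_{G\setminus x}$ and $\link_{\Delta}(x)=\Delta_{G\setminus N[x]}$, so that the statement becomes: $\Delta$ is SCM if and only if some shedding vertex $x\in V(C)$ has $\del_{\Delta}(x)$ and $\link_{\Delta}(x)$ SCM. I would treat the two implications separately, and in both directions the link is the easy half. Indeed, since $I_{\link_{\Delta}(x)}=((I(G):x),x)$ and $x$ divides no minimal generator of $(I(G):x)$, the variable $x$ is a nonzerodivisor on $R/(I(G):x)$; hence if $G$ is SCM then $(I(G):x)$ is SCM by Proposition \ref{P0} and, passing to the quotient by the regular element $x$, so is $I_{\link_{\Delta}(x)}$. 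Thus $G\setminus N[x]$ is SCM for \emph{every} vertex $x$, which disposes of the link in the forward direction; in the backward direction the link is a hypothesis.

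For the implication ``$\Leftarrow$'' (gluing) I would argue through Duval's criterion (Theorem \ref{T0}): it suffices to prove that each pure skeleton $\Delta^{[i]}$ is CM. The key point is that a shedding vertex is compatible with skeleta. Using the shedding characterization that every facet of $\del_{\Delta}(x)$ is already a facet of $\Delta$, one checks the identity $\del_{\Delta^{[i]}}(x)=(\del_{\Delta}(x))^{[i]}$, while $\link_{\Delta^{[i]}}(x)=(\link_{\Delta}(x))^{[i-1]}$ is the identity already used in the proof of Proposition \ref{P0}. Given $\del_{\Delta}(x)$ and $\link_{\Delta}(x)$ SCM, Theorem \ref{T0} makes both right-hand sides CM, so $\del_{\Delta^{[i]}}(x)$ and $\link_{\Delta^{[i]}}(x)$ are CM; feeding these into the Mayer--Vietoris sequence of the decomposition $\Delta^{[i]}=\del_{\Delta^{[i]}}(x)\cup\star_{\Delta^{[i]}}(x)$ together with Reisner's criterion forces $\tilde H_j(\Delta^{[i]})=0$ for $j<i$ and, after the same analysis on every link, yields $\Delta^{[i]}$ CM. Duval's criterion then gives that $G$ is SCM.

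For the implication ``$\Rightarrow$'' I would first produce the shedding vertex: if $G=C$ then $G=C_5$ is CM and any of its (degree two, hence directly checkable) shedding vertices works, with $\del_{\Delta}(x)=\Delta_{P_4}$ SCM; if $G\neq C$ then connectivity forces some vertex of $C$ to have degree at least three, which is a shedding vertex by Lemma \ref{L0}. With the link already settled by Proposition \ref{P0}, it remains to show that $G\setminus x$ is SCM, that is, that every $(\del_{\Delta}(x))^{[i]}=\del_{\Delta^{[i]}}(x)$ is CM. Here I would again pass to skeleta via Theorem \ref{T0}: the hypothesis that $G$ is SCM makes each $\Delta^{[i]}$ CM, Proposition \ref{P1} shows that the deletion preserves unmixedness (so the relevant skeleta stay pure), and the basic $5$-cycle structure is used to run the Mayer--Vietoris/Reisner argument in reverse and conclude that $\del_{\Delta^{[i]}}(x)$ is CM.

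The main obstacle is precisely this last step. Unlike the link, deletion of a shedding vertex does \emph{not} preserve SCM in general: removing a cut-type shedding vertex can split a connected, hence CM, skeleton into several components and destroy the Cohen--Macaulay property, so the naive principle ``SCM implies $\del$ SCM for shedding vertices'' is false. The role of the basic $5$-cycle is exactly to prevent this: because no two adjacent vertices of $C$ have degree three or more, the high-degree vertices of $C$ form an independent set, and one can select the shedding vertex $x\in V(C)$ so that deleting it neither disconnects the relevant skeleta nor creates a facet not inherited from $\Delta$. Turning this structural restriction into the surjectivity of the connecting homomorphism $\tilde H_i(\Delta^{[i]})\to\tilde H_{i-1}(\link_{\Delta^{[i]}}(x))$ in the Mayer--Vietoris sequence, equivalently into the vanishing of $\tilde H_{i-1}(\del_{\Delta^{[i]}}(x))$, is where the real work lies and is the heart of the generalization of \cite[Theorem 40]{CC}.
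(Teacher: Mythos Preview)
Your backward direction and your treatment of the link in the forward direction are fine; the paper in fact dispatches ``$\Leftarrow$'' by a single citation to \cite{JY}, so your self-contained Duval--skeleton argument is more than enough there.

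The genuine gap is exactly where you locate it: proving that $G\setminus x$ is SCM when $G$ is. Your plan is to run the star/deletion Mayer--Vietoris sequence on $\Delta^{[i]}$ ``in reverse'', but as you yourself observe, from $\tilde H_{i-1}(\Delta^{[i]})=0$ and the CMness of $\link_{\Delta^{[i]}}(x)$ one only gets $\tilde H_{i-1}(\del_{\Delta^{[i]}}(x))$ as a quotient of the top homology $\tilde H_{i-1}(\link_{\Delta^{[i]}}(x))$, which is typically nonzero. Saying that the basic $5$-cycle ``prevents disconnection'' or forces surjectivity of the connecting map is not an argument, and I do not see how to make it one at this level of generality; something specific to the combinatorics of $C$ has to be fed in.

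The paper does not try to control that connecting map at all. Instead it writes down a \emph{different} decomposition of $\Delta_{G\setminus x_1}$, using the degree-two vertices of $C$. With $C=(x_1,\dots,x_5)$, $\deg(x_1)\ge 3$, and (after relabelling) $\deg(x_2)=\deg(x_3)=\deg(x_5)=2$, one checks directly that every facet of $\Delta_{G\setminus x_1}$ either contains $x_2$, in which case it is $F_i\cup\{x_2\}$ for a facet $F_i$ of $\Delta_{G\setminus N[x_2]}$, or else contains $\{x_3,x_5\}$, in which case it is $H_j\cup\{x_3,x_5\}$ for a facet $H_j$ of $\Delta_{G\setminus N[\{x_3,x_5\}]}$. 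Thus $\Delta_{G\setminus x_1}=\Delta_1\cup\Delta_2$ where $\Delta_1$ is a cone over $\Delta_{G\setminus N[x_2]}$ and $\Delta_2$ a double cone over $\Delta_{G\setminus N[\{x_3,x_5\}]}$. Both bases are (iterated) links of $\Delta_G$ and hence SCM by your own Proposition~\ref{P0} argument; coning preserves SCM. The short exact sequence
\[
0\to K[\Delta_1\cup\Delta_2]\to K[\Delta_1]\oplus K[\Delta_2]\to K[\Delta_1\cap\Delta_2]\to 0
\]
together with a projective-dimension bound for the intersection (via \cite{H}) and the Depth Lemma then gives CMness of each pure skeleton of $\Delta_1\cup\Delta_2=\Delta_{G\setminus x_1}$. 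The point you are missing is precisely this concrete facet description, which is where the ``basic'' hypothesis (no two adjacent vertices of $C$ have degree $\ge 3$) is actually used: it guarantees that $x_2,x_3,x_5$ have the required degree-two behaviour so that the case split above is exhaustive.
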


\begin{proof}
$(\Longrightarrow):$ We may assume that $C=(x_1,x_2,x_3,x_4,x_5)$. If $G=C$, then by \cite[Proposition 4.1]{FV} it follows that $G$ is SCM. Since each vertex is a shedding vertex, $G\setminus x_i$ and $G\setminus N[x_i]$ are path and edge for each $1\leq i\leq 5$, by \cite[Corollary 7]{W} it follows that $G\setminus x_i$ and $G\setminus N[x_i]$ are SCM graphs.
Now suppose that $G\neq C$. We may assume that $\deg(x_1)\geq 3$. Since $C$ is a basic $5$-cycle, then $\deg(x_2)=2=\deg(x_5)$. Also, we may assume that $\deg(x_3)=2$ and $\deg(x_4)\geq 2$. By Lemma \ref{L0}, $x_1$ is a shedding vertex. By \cite[Theorem 3.3]{VV}, we conclude that $G\setminus N[x_1]$ is SCM. Now, we will prove that $G_1=G\setminus x_1$ is SCM. Since $G$ is SCM, then $G_2=G\setminus N[x_2]$ and $G_3=G\setminus N[x_3,x_5]$ are SCM. Suppose that $F_1,\ldots,F_r$ and $H_1,\ldots,H_t$ are facets of $\Delta_{G_2}$ and $\Delta_{G_3}$, respectively. Take $F\in\mathcal{F}(\Delta_{G_1})$. If $x_2\in F$, then $F\setminus x_2\in \mathcal{F}(\Delta_{G_2})$ and there exists $F_i$ such that $F=F_i\cup\{x_2\}$, where $1\leq i\leq r$. If $x_2\notin F$, then $x_3\in F$ and $x_4\notin F$. Therefore $x_5\in F$. Thus, $F\setminus\{x_3,x_5\}\in \mathcal{F}(\Delta_{G_3})$ and so there exists $H_j$ such that $F=H_j\cup\{x_3,x_5\}$, where $1\leq j\leq t$. This implies that $\mathcal{F}(\Delta_{G_1})=\{F_i\cup\{x_2\},H_j\cup\{x_3,x_5\}: 1\leq i\leq r,1\leq j\leq t\}$.
Set $\Delta_1=\langle F_i\cup\{x_2\}: 1\leq i\leq r\rangle$ and $\Delta_2=\langle H_j\cup\{x_3,x_5\}: 1\leq j\leq t\rangle$. Consider the exact sequence
$$0\longrightarrow K[\Delta_1\cup\Delta_2]\longrightarrow K[\Delta_1]\oplus K[\Delta_2]\longrightarrow K[\Delta_1\cap\Delta_2]\longrightarrow 0.$$
By using Auslander-Buchsbaum Theorem and \cite[Corollary 3.2]{H} $\depth(K[\Delta_1\cap\Delta_2])^{[i]}\geq\depth (K[\Delta_1]\oplus K[\Delta_2])^{[i]}-1$ and since $(K[\Delta_1]\oplus K[\Delta_2])^{[i]}$ is CM, by Depth Formula \cite[Lemma 2.3.9]{V} we conclude that $K[\Delta_1\cup\Delta_2]^{[i]}$ is CM. Therefore, by Theorem \ref{T0}, $K[\Delta_1\cup\Delta_2]$ is SCM. Since $I_{\Delta_1\cup\Delta_2}=I_{\Delta_{G_1}}$ and $I_{\Delta_{G_1}}=(I,x_1)$, it therefore follows that $(I,x_1)$ is SCM, as required.\\
$(\Longleftarrow):$ It follows by \cite[Theorem 2.2]{JY}.
\end{proof}

\begin{Corollary}
Let $G$ be a connected unmixed graph with a basic $5$-cycle $C$. Then $G$ is a CM graph if and only if there exists a shedding vertex $x\in V(C)$ such that $G\setminus x$ and $G\setminus N[x]$ are CM graphs.
\end{Corollary}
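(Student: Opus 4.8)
The plan is to deduce the corollary directly from Theorem~\ref{T1} by exploiting the characterization of Cohen--Macaulayness stated in the introduction, namely that a module is CM precisely when it is unmixed and SCM. Since the graph $G$ is assumed to be unmixed, the SCM property of $R/I(G)$ is equivalent to its CM property, so the ``left-hand side'' of the biconditional we must prove is already equivalent to ``$G$ is SCM.'' Applying Theorem~\ref{T1}, this holds if and only if there is a shedding vertex $x\in V(C)$ for which $G\setminus x$ and $G\setminus N[x]$ are SCM graphs. Thus the entire task reduces to matching up the SCM conditions on the two subgraphs with the desired CM conditions on them.

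First I would fix a shedding vertex $x\in V(C)$ produced by Theorem~\ref{T1} (or, in the reverse direction, the one whose existence we are asserting). The key intermediate fact is Proposition~\ref{P1}: because $x$ is a shedding vertex and $I=I_\Delta$ is unmixed, both $I_{\del_\Delta(x)}=(I,x)$ and $I_{\link_\Delta(x)}=((I:x),x)$ are unmixed. Translated through the dictionary $\del_\Delta(x)=\Delta_{G\setminus x}$ and $\link_\Delta(x)=\Delta_{G\setminus N[x]}$ recorded just before Theorem~\ref{T1}, this says exactly that $G\setminus x$ and $G\setminus N[x]$ are unmixed graphs. Hence for these two subgraphs the SCM and CM properties coincide as well.

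Putting the pieces together gives the forward implication: if $G$ is CM (equivalently unmixed and SCM), Theorem~\ref{T1} yields a shedding vertex $x\in V(C)$ with $G\setminus x$ and $G\setminus N[x]$ SCM; since these subgraphs are unmixed by Proposition~\ref{P1}, they are in fact CM. For the converse, if some shedding vertex $x\in V(C)$ has $G\setminus x$ and $G\setminus N[x]$ CM, then in particular they are SCM, so Theorem~\ref{T1} shows $G$ is SCM; combined with the hypothesis that $G$ is unmixed, $G$ is CM. The one technical point deserving care---and the step I would expect to be the only real obstacle---is verifying that Proposition~\ref{P1} applies to the specific shedding vertex delivered by Theorem~\ref{T1}, i.e.\ that the vertex chosen there genuinely is a shedding vertex of $\Delta_G$ (which it is, being a degree $\geq 3$ vertex of the basic $5$-cycle by Lemma~\ref{L0}), so that the unmixedness transfers cleanly to both subgraphs and the CM/SCM equivalence can be invoked on each of them.
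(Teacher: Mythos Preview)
Your proposal is correct and follows essentially the same approach as the paper: invoke the equivalence CM $\Leftrightarrow$ unmixed $+$ SCM, apply Theorem~\ref{T1}, and use Proposition~\ref{P1} to transfer unmixedness to the two subgraphs so that the SCM/CM equivalence holds for them as well. Your final ``technical point'' is in fact a non-issue, since the statement of Theorem~\ref{T1} already asserts that the vertex $x$ it produces is a shedding vertex, so Proposition~\ref{P1} applies immediately without any appeal to Lemma~\ref{L0} or degree considerations.
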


\begin{proof}
It is known that if  $G$ is a connected unmixed graph, then $G$ is CM if and only if $G$ is SCM. Therefore the result follows by Theorem \ref{T1} and Proposition \ref{P1}. 
\end{proof}

It is known that $n$-cycle graphs are SCM if and only if $n=3,5$. So we give the following examples which show that having a basic $5$-cycle $C$ in $G$ is essential. By using \cite[Lemma 2.3]{AMS} and Macaulay2 \cite{GS} the following examples easily follow.
\begin{Example}
Let $I=(x_1x_2,x_1x_4,x_2x_3,x_3x_4,x_1x_5)$ be an edge ideal of a bipartite graph $G$. Then $G$ is SCM, but $G\setminus x_5$ is not SCM.
\end{Example}

\begin{Example}
Let $I=(x_1x_2,x_1x_4,x_1x_5,x_2x_3,x_2x_7,x_3x_4,x_4x_5,x_4x_7,x_6x_7)$  be an edge ideal of a graph $G$. Then $G$ is SCM and $x_5$ is a shedding vertex, but $G\setminus x_5$ is not SCM.
\end{Example}

The following lemma easily follows by \cite[Theorem 6.4.23]{V} and Auslander-Buchsbaum formula.
\begin{Lemma}\label{L1}
Let $I$ be a monomial ideal of $R$. If $I$ is SCM, then $\depth(R/I)=\min\{\dim(R/\frak{p}): \frak{p}\in\Ass(I)\}$.
\end{Lemma}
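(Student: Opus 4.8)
The plan is to transfer the desired depth identity to a statement about projective dimension, where the sequentially Cohen--Macaulay hypothesis can be applied directly, and then to translate back using the Auslander--Buchsbaum formula.

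I would begin by rewriting the right-hand side in terms of heights. Since $R=K[x_1,\dots,x_n]$ is catenary and equidimensional, every prime $\frak{p}$ satisfies $\dim(R/\frak{p})=n-\hte(\frak{p})$; taking the minimum over $\frak{p}\in\Ass(I)$ therefore gives
\[\min\{\dim(R/\frak{p}):\frak{p}\in\Ass(I)\}=n-\max\{\hte(\frak{p}):\frak{p}\in\Ass(I)\}=n-\bight(I).\]
Thus it suffices to prove $\depth(R/I)=n-\bight(I)$.

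Next I would invoke the SCM hypothesis. For an arbitrary module the standard bound $\depth(R/I)\le\dim(R/\frak{p})$ for each $\frak{p}\in\Ass(I)$ gives, through Auslander--Buchsbaum, only the inequality $\pd(R/I)\ge\bight(I)$; the role of \cite[Theorem 6.4.23]{V} is to promote this to the equality $\pd(R/I)=\bight(I)$ under the assumption that $R/I$ is SCM. With this in hand, the Auslander--Buchsbaum formula $\pd(R/I)+\depth(R/I)=\depth(R)=n$ yields $\depth(R/I)=n-\pd(R/I)=n-\bight(I)$, which by the previous step is exactly $\min\{\dim(R/\frak{p}):\frak{p}\in\Ass(I)\}$.

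There is essentially no hard computation here; the only point requiring care is the correct application of \cite[Theorem 6.4.23]{V}, namely checking that its hypotheses cover our situation and that it does identify the projective dimension with the big height of $I$. Everything else is the routine height-versus-codimension bookkeeping valid in a polynomial ring, so I expect the whole argument to be short.
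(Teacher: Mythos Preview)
Your proposal is correct and follows essentially the same approach as the paper, which simply states that the lemma ``easily follows by \cite[Theorem 6.4.23]{V} and Auslander-Buchsbaum formula.'' You have merely made explicit the routine translation via $\bight(I)$ that connects these two ingredients.
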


The following result is a generalization of \cite[Lemma 4.1(i)]{CHH}.
\begin{Corollary}\label{C1}
Let $I,J$ be two monomial ideals of $R$ such that $I:J$ is SCM. Then $\depth(R/I)\leq\depth(R/I:J)$.
\end{Corollary}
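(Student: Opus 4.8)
The plan is to relate the depth of $R/(I:J)$ to the associated primes of $I:J$ using the two results already established, namely Lemma~\ref{L1} and the colon-ideal stability from Proposition~\ref{P0}. Since $I:J$ is assumed SCM, Lemma~\ref{L1} applies directly to give
\[
\depth(R/(I:J))=\min\{\dim(R/\pp):\pp\in\Ass(I:J)\}.
\]
The strategy is therefore to show that every associated prime of $I:J$ is, in a suitable sense, no smaller in dimension than what governs $\depth(R/I)$, so that the minimum over $\Ass(I:J)$ is at least $\depth(R/I)$.

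First I would reduce to the case $J=(u)$ a single monomial. Writing $J=(u_1,\dots,u_m)$ with the $u_k$ monomial generators, one has $I:J=\Sect_{k}(I:u_k)$, and colon by a principal ideal is the building block; the key containment $\Ass(I:J)\subseteq\Union_k\Ass(I:u_k)$ (or a comparable inclusion obtained from the standard exact sequences) lets me pass from the multi-generator colon to principal colons. For a principal colon, the exact sequence
\[
0\longrightarrow R/(I:u)\overset{u}\longrightarrow R/I\longrightarrow R/(I,u)\longrightarrow 0
\]
gives $\Ass(I:u)\subseteq\Ass(I)$. Consequently every $\pp\in\Ass(I:J)$ satisfies $\pp\in\Ass(I)$, so $\dim(R/\pp)\geq\min\{\dim(R/\qq):\qq\in\Ass(I)\}$.

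Next I would compare $\min\{\dim(R/\qq):\qq\in\Ass(I)\}$ with $\depth(R/I)$. By the general inequality $\depth(R/I)\leq\dim(R/\pp)$ for every $\pp\in\Ass(I)$ (associated primes have coheight at least the depth), we get $\depth(R/I)\leq\min\{\dim(R/\qq):\qq\in\Ass(I)\}$. Chaining the inequalities yields
\[
\depth(R/I)\leq\min\{\dim(R/\qq):\qq\in\Ass(I)\}\leq\min\{\dim(R/\pp):\pp\in\Ass(I:J)\}=\depth(R/(I:J)),
\]
which is exactly the assertion. Note that Proposition~\ref{P0} guarantees $I:J$ is SCM is consistent (indeed $I:J$ SCM is the hypothesis), so Lemma~\ref{L1} is legitimately available for the right-hand equality.

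I expect the main obstacle to lie in the second inclusion, controlling $\Ass(I:J)$ from $\Ass(I)$ when $J$ is not principal: the naive containment $\Ass(I:J)\subseteq\Ass(I)$ must be justified carefully, since intersecting the ideals $I:u_k$ can in principle create embedded or lower-dimensional primes, and one must verify that each surviving prime of $I:J$ still lies in $\Ass(I)$ (or at least has coheight bounded below by $\depth(R/I)$). The cleanest route is to observe that $R/(I:J)$ embeds into a direct sum or product of the $R/(I:u_k)$ via the natural map, forcing $\Ass(I:J)\subseteq\Union_k\Ass(I:u_k)\subseteq\Ass(I)$; once this is in hand the depth comparison is immediate.
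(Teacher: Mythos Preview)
Your argument is correct and follows the same route as the paper: apply Lemma~\ref{L1} to $I:J$, use the containment $\Ass(I:J)\subseteq\Ass(I)$, and bound below by the general inequality $\depth(R/I)\leq\min\{\dim R/\pp:\pp\in\Ass(I)\}$. The paper simply declares $\Ass(I:J)\subseteq\Ass(I)$ to be ``clear'' while you supply the justification via the embedding $R/(I:J)\hookrightarrow\bigoplus_k R/(I:u_k)$ (which already dissolves the ``obstacle'' you flag for non-principal $J$), but the approach is identical.
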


\begin{proof}
It is clear that $\Ass(I:J)\subseteq\Ass(I)$. Therefore by Lemma \ref{L1} we have $$\depth(R/I:J)=\min\{\dim R/\frak{p}: \frak{p}\in\Ass(I:J)\}$$  $$\geq\min\{\dim R/\frak{p}: \frak{p}\in\Ass(I)\}$$
$$\geq\depth(R/I).$$
\end{proof}

The following example shows that the condition on $I:J$ in Corollary \ref{C1} is essential. For the computation of the following example we use Macaulay2 \cite{GS}.
\begin{Example}
Let $R=K[x_1,x_2,x_3,x_4]$, $I=(x_1x_3,x_2x_4)$ and $J=(x_2x_3, x_1x_4)$. Then $I:J=(x_1,x_3)(x_2,x_4)$ such that $\depth(R/I)=2\nleqslant 1=\depth(R/I:J)$.

\end{Example}

In the following result, we use the concept of Castelnuovo-Mumford regularity of a graded $R$-module $M$ which is defined as $\reg(M)=\max\{j-i| ~\beta_{i,j}(M)\neq 0\}$.
\begin{Proposition}\label{P2}
Let $I$ be a square-free monomial ideal and $x$ is a shedding vertex such that $(I,x)$ is SCM. Then we have the following:
\begin{itemize}
\item[(i)] $\depth(R/I)=\min\{\depth(R/(I:x)),\depth(R/(I,x))\}$;
\item[(ii)] $\pd(R/I)=\max\{\pd(R/(I:x)),\pd(R/(I,x))\}$;
\item[(iii)] $\reg(R/I)=\max\{\reg(R/(I:x)),\reg(R/(I,x))+1\}$.
\end{itemize}
\end{Proposition}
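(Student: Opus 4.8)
The plan is to derive all three statements from the single short exact sequence
\[
0 \longrightarrow (R/(I:x))(-1) \overset{x}{\longrightarrow} R/I \longrightarrow R/(I,x) \longrightarrow 0, \qquad (\star)
\]
combined with the observation already exploited in Proposition \ref{P1} that a shedding vertex satisfies $\Ass(I,x)\subseteq\Ass(I)$, so that $\Ass(I)=\Ass(I:x)\cup\Ass(I,x)$. Throughout I write $d=\depth R/I$, $d_1=\depth R/(I:x)$ and $d_2=\depth R/(I,x)$.

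For (i), the depth lemma applied to $(\star)$ yields $d\geq\min\{d_1,d_2\}$ at once, since twisting does not change depth. For the reverse inequality I would first record that $d\leq d_2$: because $(I,x)$ is SCM, Lemma \ref{L1} gives $d_2=\min\{\dim R/\pp:\pp\in\Ass(I,x)\}$, while the elementary bound $\depth R/I\leq\dim R/\pp$ holds for every $\pp\in\Ass(I)$, in particular for every $\pp\in\Ass(I,x)\subseteq\Ass(I)$. If $d_2\leq d_1$ this already gives $d=d_2=\min\{d_1,d_2\}$. If instead $d_1<d_2$, I would read the bottom of the long exact sequence in local cohomology attached to $(\star)$: as $d_1-1<d_2$ the term $H^{d_1-1}_{\mm}(R/(I,x))$ vanishes, so $H^{d_1}_{\mm}\bigl((R/(I:x))(-1)\bigr)$ injects into $H^{d_1}_{\mm}(R/I)$; the former is nonzero, whence $d\leq d_1$. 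Either way $d=\min\{d_1,d_2\}$.

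Part (ii) then follows formally. Over the polynomial ring $R$ the Auslander--Buchsbaum formula reads $\pd M=n-\depth M$ for every finitely generated graded module, and the substitution $d\mapsto n-d$ interchanges the minimum of (i) with the maximum of (ii); hence $\pd R/I=\max\{\pd R/(I:x),\pd R/(I,x)\}$.

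For (iii) the sequence $(\star)$ is again the engine, but now the degree-one twist on the sub-object is visible: multiplication by $x$ shifts the regularity of the sub-object upward by one, and this shift is the source of the single additive $+1$ recorded in (iii). Reading the standard regularity estimates for a short exact sequence off $(\star)$ produces an additive comparison among $\reg R/I$, $\reg R/(I:x)$ and $\reg R/(I,x)$, and I would upgrade these inequalities to the stated equality (iii) by feeding the SCM hypothesis on $(I,x)$ into the top-degree invariants $a_i(-)=\max\{j:H^i_{\mm}(-)_j\neq 0\}$ that govern regularity. The hard part is precisely here: when the two regularities controlling $(\star)$ coincide the bare estimates leave room for cancellation, and it is the sequential Cohen--Macaulayness of $(I,x)$ (together with the associated-prime identity above) that must be used to rule the cancellation out; keeping careful track of exactly which term the additive $+1$ from the twist attaches to is the delicate bookkeeping that pins down the final form of (iii).
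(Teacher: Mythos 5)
Your parts (i) and (ii) are correct and complete. For (i) you reproduce the paper's two key ingredients exactly — the Depth Formula applied to the exact sequence for the lower bound, and Lemma \ref{L1} together with $\Ass(I,x)\subseteq\Ass(I)$ (the shedding-vertex fact from Proposition \ref{P1}) for $d\leq d_2$ — but you replace the paper's citation of \cite[Lemma 4.1]{CHH} (which gives $\depth R/I\leq\depth R/(I:x)$ unconditionally for monomial ideals) with a self-contained local cohomology argument in the only case where that inequality is needed, namely $d_1<d_2$: there $H^{d_1-1}_{\mm}(R/(I,x))=0$, so $H^{d_1}_{\mm}(R/(I:x))(-1)$ injects into $H^{d_1}_{\mm}(R/I)$ and is nonzero by Grothendieck nonvanishing. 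That case split is airtight and arguably more elementary than the citation. Part (ii) is the same Auslander--Buchsbaum reduction as the paper.

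Part (iii), however, is a genuine gap: you describe a plan and then explicitly defer the crucial step (``the hard part is precisely here''), never supplying it. The exact sequence only yields the upper bound $\reg R/I\leq\max\{\reg R/(I:x)+1,\,\reg R/(I,x)\}$; the substance of (iii) is the matching lower bound, specifically $\reg R/I\geq\reg R/(I:x)+1$, and this does \emph{not} follow from the long exact sequence plus bookkeeping of $a_i$-invariants, nor from the SCM hypothesis on $(I,x)$, which is your proposed mechanism. The paper proves (iii) by citing \cite[Theorem 4.2]{HW}, which establishes exactly this recursion for any \emph{shedding} vertex with no sequential Cohen--Macaulay hypothesis at all — the proof there is combinatorial (via Hochster-type formulas exploiting the shedding condition), so you are missing the actual idea, and the SCM-based cancellation argument you sketch has no clear route to completion. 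One further point in your favor: your instinct that the twist attaches the $+1$ to the sub-object $R/(I:x)$ is right, and it matches both \cite[Theorem 4.2]{HW} and Lemma \ref{L4}; the statement as printed, with the $+1$ on $\reg(R/(I,x))$, appears to be a transcription slip — for $I=I(P_4)=(ab,bc,cd)$ and $x=b$ (a shedding vertex with $(I,b)=(b,cd)$ SCM) one has $\reg R/I=1$ while $\max\{\reg R/(I:b),\,\reg R/(I,b)+1\}=\max\{0,2\}=2$, whereas $\max\{\reg R/(I:b)+1,\,\reg R/(I,b)\}=1$ as it should be. Had you committed to your own twist analysis instead of hedging, you would have caught this.
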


\begin{proof}
$(i)$ Since $x$ is a shedding vertex, we have $\Ass(I)=\Ass(I:x)\cup\Ass(I,x)$ and since $(I,x)$ is SCM, by Lemma \ref{L1} we have \[\depth(R/(I,x))=\min\{\dim(R/\frak{p}): \frak{p}\in\Ass(I,x)\}\] \[\geq\min\{\dim(R/\frak{p}): \frak{p}\in\Ass(I)\}\geq\depth(R/I).\] On the other hand by using \cite[Lemma 4.1]{CHH} we have $\depth(R/I)\leq\depth(R/(I:x))$. It therefore follows that
$\depth(R/I)\leq\min\{\depth(R/(I:x)),\depth(R/(I,x))\}$. Conversely by using the Depth Formula \cite[Lemma 2.3.9]{V}
 on the exact sequence $$0\longrightarrow R/(I:x)\overset{x}\longrightarrow R/I\longrightarrow R/(I,x)\longrightarrow 0,$$ we have $\depth(R/I)\geq\min\{\depth(R/(I:x)),\depth(R/(I,x))\}$. This completes the proof of case $(i)$.\\
$(ii)$ It follows by $(i)$ and the Auslander-Buchsbaum Theorem.\\
$(iii)$ It follows by \cite[Theorem 4.2]{HW}.
\end{proof}

\begin{Definition}(\cite[Definition 1]{BC})
A vertex $x$ of $G$ is called codominated if there exists a vertex $y\in V\setminus\{x\}$ such that $N[y]\subseteq N[x]$.
\end{Definition}

The following lemma immediately follows from \cite[Lemma 6]{W} and \cite[Theorem 5]{CC}.
\begin{Lemma}\label{L2}
Let $G$ be a $C_5$-free graph. Then a vertex $x$ of $G$ is a shedding vertex if and only if it is codominated. In particular, if $G$ is a bipartite graph then a vertex $x$ of $G$ is a shedding vertex if and only if it is codominated.
\end{Lemma}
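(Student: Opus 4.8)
The plan is to translate the simplicial-complex definition of a shedding vertex into a purely graph-theoretic domination statement, and then to argue the two implications separately, with $C_5$-freeness entering only in one of them. Since $\del_\Delta(x)=\Delta_{G\setminus x}$ and $\link_\Delta(x)=\Delta_{G\setminus N[x]}$, the faces of $\link_\Delta(x)$ are exactly the independent sets of $G$ contained in $W:=V\setminus N[x]$, and the facets of $\del_\Delta(x)$ are the maximal independent sets of $G\setminus x$. An independent set $S\subseteq W$ is maximal in $G\setminus x$ precisely when it is maximal in the induced subgraph on $W$ and, in addition, every vertex of $N(x)$ has a neighbour in $S$. Hence $x$ is a shedding vertex if and only if no maximal independent set of $G[W]$ dominates $N(x)$. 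This reformulation is the engine of both directions.

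For the implication that a codominated vertex is a shedding vertex, which needs no hypothesis on $G$ and is the content of \cite[Lemma 6]{W}, I would take $y\neq x$ with $N[y]\subseteq N[x]$ and observe that $y\in N[x]$ while $N(y)\subseteq N[x]$. Thus for any independent set $S\subseteq W$ we have $y\notin S$ and $S$ meets no neighbour of $y$, so $S\cup\{y\}$ is again independent; therefore $S$ is never a maximal independent set of $G\setminus x$, and no face of $\link_\Delta(x)$ is a facet of $\del_\Delta(x)$.

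For the converse, which is where $C_5$-freeness is used and which is \cite[Theorem 5]{CC}, I would argue by contraposition: assuming $x$ is not codominated, I produce either a non-shedding witness or a forbidden cycle. Not being codominated means every $y\in N(x)$ has a neighbour $z_y\in W$ (otherwise $N[y]\subseteq N[x]$). If the chosen set $Z=\{z_y:y\in N(x)\}$ is independent, I extend it to a maximal independent set $S$ of $G[W]$; then $S\supseteq Z$ dominates $N(x)$, so by the reformulation $x$ is not a shedding vertex. If instead $Z$ is not independent, there are distinct $y_i,y_j\in N(x)$ with $z_{y_i}$ adjacent to $z_{y_j}$, and then $x,y_i,z_{y_i},z_{y_j},y_j$ are five distinct vertices carrying the edges $xy_i,\,y_iz_{y_i},\,z_{y_i}z_{y_j},\,z_{y_j}y_j,\,y_jx$, i.e. a $5$-cycle. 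Under the $C_5$-free hypothesis this is impossible, so a shedding vertex must be codominated. The main obstacle is the bookkeeping in this last step: one must choose the vertices $z_y$ carefully so that the forced $5$-cycle is genuinely the forbidden configuration (in particular chord-free, if $C_5$-free is read as excluding only induced $5$-cycles), and this is exactly what \cite[Theorem 5]{CC} secures.

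Finally, the bipartite case is immediate: a bipartite graph has no odd cycles, hence no $5$-cycle at all, so it is $C_5$-free and the equivalence specializes. I would present the bipartite statement as a one-line consequence of the general one rather than reproving it.
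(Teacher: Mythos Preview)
Your proposal is correct and takes the same approach as the paper, which simply records that the lemma follows immediately from \cite[Lemma~6]{W} and \cite[Theorem~5]{CC} without further argument. You unpack the content of those two citations and correctly flag where the $C_5$-free hypothesis and the careful choice of the $z_y$ are needed, but the route is identical.
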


Two edges $\{x,y\}$ and $\{z,u\}$ of $G$ is called $3$-{\it disjoint} if the induced subgraph of $G$ on $\{x,y,z,u\}$ is disconnected or equivalently in the complement of $G$ the induced graph on  $\{x,y,z,u\}$ is four-cycle. A subset $A$ of edges of $G$ is called a pairwise $3$-disjoint set of edges in $G$ if each pair of edges of $A$ is $3$-disjoint. The maximum cardinality of all pairwise $3$-disjoint sets of edges in $G$ is denoted by $a(G)$.

\begin{Lemma}(\cite[Lemma 23]{BC})\label{L3}
If $x$ is a codominated vertex of a graph $G$, then $a(G\setminus x)\leq a(G)$ and $a(G\setminus N[x])+1\leq a(G)$.
\end{Lemma}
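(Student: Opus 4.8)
The plan is to read $a(\cdot)$ as the largest size of an induced matching in the sense fixed here: a set of edges in which every two are $3$-disjoint, i.e.\ their four endpoints span an induced subgraph with no crossing edge between the two edges. Both inequalities I would prove constructively, by taking a witnessing pairwise $3$-disjoint set on the smaller graph and producing a pairwise $3$-disjoint set of the claimed size in $G$. The recurring elementary observation is that whether two edges are $3$-disjoint depends only on the induced subgraph on their four endpoints.

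For the first inequality I would use only that $G\setminus x$ is an induced subgraph of $G$. If $A$ is a pairwise $3$-disjoint set of edges in $G\setminus x$, then for any two of its edges the four endpoints avoid $x$, so the subgraph they induce is identical in $G\setminus x$ and in $G$; hence $A$ stays pairwise $3$-disjoint in $G$, giving $a(G\setminus x)\le a(G)$. This step uses nothing about $x$ being codominated.

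For the second inequality I would first locate an edge to append. Since $x$ is codominated there is $y\neq x$ with $N[y]\subseteq N[x]$; as $y\in N[y]\subseteq N[x]$ and $y\neq x$, the vertex $y$ is adjacent to $x$, so $\{x,y\}$ is an edge of $G$. I would then take a maximum pairwise $3$-disjoint set $A$ in $G\setminus N[x]$, so that $|A|=a(G\setminus N[x])$, and claim that $A\cup\{\{x,y\}\}$ is pairwise $3$-disjoint in $G$. As before, $A$ itself remains $3$-disjoint in $G$ because all its endpoints lie outside $N[x]$, so only the interaction of the new edge $\{x,y\}$ with each $e=\{z,u\}\in A$ must be checked.

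This last check is the crux, and it is exactly where codomination enters. The four vertices $x,y,z,u$ are distinct because $x,y\in N[x]$ while $z,u\notin N[x]$. There is no edge from $x$ to $z$ or $u$ since $z,u\notin N[x]$; and there is no edge from $y$ to $z$ or $u$, for an edge $\{y,z\}$ would force $z\in N(y)\subseteq N[y]\subseteq N[x]$, contradicting $z\notin N[x]$, and likewise for $u$. Hence the induced subgraph on $\{x,y,z,u\}$ has no edge between $\{x,y\}$ and $\{z,u\}$, so it is disconnected and the two edges are $3$-disjoint. This produces a pairwise $3$-disjoint set of size $a(G\setminus N[x])+1$ in $G$, whence $a(G\setminus N[x])+1\le a(G)$. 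The only delicate point is confirming that $N[y]\subseteq N[x]$ forbids edges from $y$ into $G\setminus N[x]$; everything else reduces to the routine fact that $3$-disjointness is an induced-subgraph property.
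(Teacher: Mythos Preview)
Your argument is correct. The paper does not supply a proof of this lemma; it is quoted verbatim from \cite[Lemma 23]{BC}, so there is no in-paper proof to compare against. What you have written is essentially the standard argument: the first inequality is the trivial monotonicity of $a(\cdot)$ under passing to induced subgraphs, and for the second you correctly identify that codomination guarantees both that $\{x,y\}$ is an edge and that neither $x$ nor $y$ has a neighbour in $V(G)\setminus N[x]$, so $\{x,y\}$ is $3$-disjoint from every edge of $G\setminus N[x]$. There are no gaps.
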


\begin{Lemma}(\cite[Lemma 2.10]{DHS})\label{L4}
Let $x$ be a vertex of a graph $G$. Then $\reg(R/I(G))\leq\max\{\reg(R/I(G\setminus x)),\reg(R/I(G\setminus N[x]))+1\}$.
Moreover, $\reg(R/I(G))$ always equals to one of $\reg(R/I(G\setminus x))$ or $\reg(R/I(G\setminus N[x]))+1$.
\end{Lemma}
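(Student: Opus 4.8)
The plan is to derive everything from the short exact sequence attached to the vertex $x$, together with the deletion/link dictionary already recorded in this paper. Writing $I=I(G)$, I would start from
\[0\longrightarrow (R/(I:x))(-1)\overset{x}\longrightarrow R/I\longrightarrow R/(I,x)\longrightarrow 0,\]
where the first map is multiplication by $x$; it is injective because its kernel is $(I:x)$, and the shift $(-1)$ makes it homogeneous. The first task is to identify the regularities of the outer terms with graph regularities. Since $\del_{\Delta_G}(x)=\Delta_{G\setminus x}$ has Stanley--Reisner ideal $(I,x)$, we get $\reg(R/(I,x))=\reg(R/I(G\setminus x))$. Likewise, $x$ is a nonzerodivisor on $R/(I:x)$ (as $I$ is square-free, $(I:x)=(I:x^2)$), so quotienting by $x$ gives $R/((I:x),x)=R/I_{\link_{\Delta_G}(x)}=R/I(G\setminus N[x])$; hence $\reg(R/(I:x))=\reg(R/I(G\setminus N[x]))$, and accounting for the shift, $\reg\big((R/(I:x))(-1)\big)=\reg(R/I(G\setminus N[x]))+1$.

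For the inequality I would invoke the standard bound that for any short exact sequence $0\to A\to B\to C\to 0$ of finitely generated graded modules one has $\reg(B)\le\max\{\reg(A),\reg(C)\}$ (immediate from the long exact sequence in local cohomology, since $H^i_{\mm}(B)_j$ vanishes whenever both neighbours do). Applied to the sequence above with the identifications just made, this yields exactly
\[\reg(R/I(G))\le\max\{\reg(R/I(G\setminus x)),\ \reg(R/I(G\setminus N[x]))+1\}.\]

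The substantive part is the ``moreover'' clause, and here the single bound above is insufficient. I would add the two companion inequalities from the same long exact sequence, $\reg(A)\le\max\{\reg(B),\reg(C)+1\}$ and $\reg(C)\le\max\{\reg(A)-1,\reg(B)\}$, and run a short case analysis on the three. It shows that $\reg(B)$ must equal $\reg(A)$ or $\reg(C)$ except in the single degenerate configuration $\reg(A)=\reg(C)+1$ with $\reg(B)<\reg(C)$. Through the identifications, this forbidden case reads $\reg(R/I(G\setminus N[x]))=\reg(R/I(G\setminus x))$ together with $\reg(R/I(G))<\reg(R/I(G\setminus x))$. This is where an outside input is needed: the monotonicity of regularity under induced subgraphs, $\reg(R/I(G\setminus x))\le\reg(R/I(G))$, directly contradicts the inequality $\reg(R/I(G))<\reg(R/I(G\setminus x))$. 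So the degenerate case never occurs, and $\reg(R/I(G))$ equals $\reg(R/I(G\setminus x))$ or $\reg(R/I(G\setminus N[x]))+1$.

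The main obstacle I anticipate is precisely this last boundary case: the elementary three-inequality bookkeeping for a short exact sequence cannot close it by homological algebra alone, and one must supply the purely graph-theoretic monotonicity $\reg(R/I(H))\le\reg(R/I(G))$ for induced subgraphs $H\subseteq G$ (itself a consequence of the fact that the Betti table of an induced subgraph is dominated by that of the ambient graph). I would therefore isolate that monotonicity as the key auxiliary lemma and feed it into the case analysis exactly at the step where $\reg(A)=\reg(C)+1$.
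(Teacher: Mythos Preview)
The paper does not supply a proof of this lemma; it is quoted from \cite[Lemma~2.10]{DHS} and used as a black box. Your argument is correct and is essentially the standard one from that reference: the short exact sequence attached to $x$, the link/deletion identifications, the three regularity inequalities for a short exact sequence, and the induced-subgraph monotonicity $\reg(R/I(G\setminus x))\le\reg(R/I(G))$ to eliminate the residual case $\reg(A)=\reg(C)+1$ with $\reg(B)<\reg(C)$.
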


\begin{Theorem}\label{T2}
Let $\mathcal{F}$ be a family of graphs such that every graph has a codominated vertex.
If $G\setminus x$ and $G\setminus N[x]$ are in $\mathcal{F}$ for $G\in \mathcal{F}$ and a codominated vertex $x$ of $G$,
 then $\reg(R/I(G))=a(G)$ for all $G\in \mathcal{F}$.
\end{Theorem}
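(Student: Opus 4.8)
The plan is to induct on the number of vertices of $G$. For the base case, suppose $G$ has no edges; then $I(G)=(0)$, so $\reg(R/I(G))=\reg(R)=0$, while $G$ admits no pairwise $3$-disjoint set of edges and hence $a(G)=0$, giving the equality. (It is worth noting first that, by the definition of $3$-disjointness, $a(G)$ is exactly the induced matching number of $G$: two edges are $3$-disjoint precisely when the four endpoints span no further edges.) For the inductive step I would take $G\in\mathcal{F}$ with an edge; by hypothesis $G$ has a codominated vertex $x$, and its witness $y$ (with $N[y]\subseteq N[x]$) is a neighbour of $x$, so $N[x]$ contains at least two vertices. Thus $G\setminus x$ and $G\setminus N[x]$ both lie in $\mathcal{F}$ and have strictly fewer vertices than $G$, so the induction hypothesis yields $\reg(R/I(G\setminus x))=a(G\setminus x)$ and $\reg(R/I(G\setminus N[x]))=a(G\setminus N[x])$.

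The upper bound $\reg(R/I(G))\le a(G)$ is then a mechanical combination of the two recursion lemmas. By Lemma \ref{L4} and the induction hypothesis, $\reg(R/I(G))\le\max\{\reg(R/I(G\setminus x)),\,\reg(R/I(G\setminus N[x]))+1\}=\max\{a(G\setminus x),\,a(G\setminus N[x])+1\}$. Since $x$ is codominated, Lemma \ref{L3} gives $a(G\setminus x)\le a(G)$ and $a(G\setminus N[x])+1\le a(G)$, so this maximum is at most $a(G)$, and hence $\reg(R/I(G))\le a(G)$. Note that only the ``$\le\max$'' direction of Lemma \ref{L4} is needed here; the ``moreover'' clause is not required for this half.

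For the reverse inequality I would invoke the standard lower bound $a(G)\le\reg(R/I(G))$, valid for every graph: a maximum pairwise $3$-disjoint set of $s$ edges spans an induced subgraph that is a disjoint union of $s$ edges, whose edge ideal has regularity $s$, and regularity does not decrease when passing to an induced subgraph. Combining this with the upper bound forces $\reg(R/I(G))=a(G)$ and closes the induction. I expect the main obstacle to be precisely this lower bound: the upper bound follows routinely from Lemmas \ref{L3} and \ref{L4}, but $a(G)\le\reg(R/I(G))$ is the one ingredient not furnished earlier in the excerpt and must be imported (or reproved via induced-subgraph monotonicity of regularity together with the computation $\reg=s$ for $s$ disjoint edges). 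A secondary point to treat with care is the well-foundedness of the recursion — confirming that a codominated vertex always has a neighbour, so that $G\setminus N[x]$ and $G\setminus x$ are genuinely smaller than $G$, and that edgeless graphs serve as a legitimate base case regardless of their membership in $\mathcal{F}$.
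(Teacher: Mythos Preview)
Your proposal is correct and follows essentially the same route as the paper: induction on $|V|$, the upper bound via Lemma~\ref{L4} combined with Lemma~\ref{L3}, and the lower bound $a(G)\le\reg(R/I(G))$ imported separately. The paper cites \cite[Lemma~2.2]{K} for that lower bound and takes $|V|=2$ (a single edge) as the base case, whereas you sketch the induced-matching argument yourself and start from edgeless graphs; neither difference is substantive.
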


\begin{proof} Let $G$ be an arbitrary element of $\mathcal{F}$. By \cite[Lemma 2.2]{K}, we have $\reg(R/I(G))\geq a(G)$. Now by induction on $\vert{V}\vert$, we prove that $\reg(R/I(G))\leq a(G)$.
If $\vert{V}\vert=2$, then $\reg(R/I(G))=1$ and also $a(G)=1$. Thus the result follows in this case. Suppose that $\vert{V}\vert\geq 2$. There exists a codominated vertex $x\in V$ such that $G\setminus x$ and $G\setminus N[x]$ are in $\mathcal{F}$. By Lemma \ref{L4} we have $\reg(R/I(G))\leq\max\{\reg(R/I(G\setminus x)),\reg(R/I(G\setminus N[x]))+1\}$ and by using induction hypothesis it follows $\reg(R/I(G))\leq\max\{a(G\setminus x),a(G\setminus N[x])+1\}$.
Now by Lemma \ref{L3} we have $\reg(R/I(G))\leq a(G)$. This completes the proof, as required.
\end{proof}

By using Theorem \ref{T2}, we readily conclude the following known result.
\begin{Corollary}\label{C2}
Let $G$ be a graph such that one of the following conditions satisfies:
\begin{itemize}
\item[(i)] \cite[Corollary 6.9]{HV} $G$ is a chordal graph;
\item[(ii)] \cite[Theorem 3.2]{Van} $G$ is a SCM bipartite graph;
\item[(iii)] \cite[Theorem 2.4]{KM} $G$ is a $C_5$-free vertex decomposable;
\item[(iv)] \cite[Theorem 11]{T} $G$ is a Cameron-Walker graph.
\item[(v)] \cite[Lemma 3.4]{MMCRTY} $G$ is a very well-covered Cohen-Macaulay graph.
\end{itemize}
Then $\reg(R/I(G))=a(G)$.
\end{Corollary}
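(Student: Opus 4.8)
The plan is to verify, for each of the five classes, that the class (or a suitable enlargement of it) is a family $\mathcal{F}$ satisfying the two hypotheses of Theorem \ref{T2}: every graph in $\mathcal{F}$ admits a codominated vertex $x$, and both $G\setminus x$ and $G\setminus N[x]$ again lie in $\mathcal{F}$. Once these are checked, $\reg(R/I(G))=a(G)$ follows immediately from Theorem \ref{T2}. The core of the argument is therefore to produce a codominated vertex and to confirm closure under the deletion $G\setminus x$ and the link $G\setminus N[x]$ in each case.

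I would begin with the chordal case (i), which is the cleanest. A chordal graph with at least one edge has a simplicial vertex $v$, i.e. $N(v)$ is a clique; if $u$ is any neighbor of $v$, then $N[v]$ is a clique containing $u$, so every element of $N[v]$ is adjacent to $u$, whence $N[v]\subseteq N[u]$ and $u$ is codominated. Since chordal graphs are closed under passing to induced subgraphs, and both $G\setminus x$ and $G\setminus N[x]$ are induced subgraphs, the family of chordal graphs satisfies the hypotheses of Theorem \ref{T2}.

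For the bipartite SCM case (ii) and the $C_5$-free vertex decomposable case (iii), I would invoke Lemma \ref{L2}, which identifies shedding vertices with codominated vertices for $C_5$-free (in particular bipartite) graphs. A vertex decomposable graph has, by definition, a shedding vertex $x$ for which both $G\setminus x$ and $G\setminus N[x]$ are again vertex decomposable; the properties ``bipartite'' and ``$C_5$-free'' are inherited by induced subgraphs, and for bipartite graphs SCM is equivalent to vertex decomposability. Hence in each case the relevant family is closed under the two operations and each of its members has a codominated (equivalently, shedding) vertex, so Theorem \ref{T2} applies.

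The main obstacle is cases (iv) and (v), since the classes of Cameron--Walker graphs and of very well-covered Cohen--Macaulay graphs are defined by global structural conditions that are not preserved under arbitrary induced subgraphs, so one cannot take $\mathcal{F}$ to be the class verbatim. Here I would appeal to the known structure theorems: a very well-covered Cohen--Macaulay graph is vertex decomposable, and its structure (a perfect matching together with the associated partial order on the matched pairs) exhibits an explicit codominated vertex whose deletion and link remain very well-covered and Cohen--Macaulay; similarly, the Cameron--Walker structure theorem presents $G$ as a bipartite ``core'' with pendant edges and triangles attached, from which a codominated leaf-type vertex can be extracted, and closure is verified for an appropriately enlarged family (for instance, allowing disjoint unions with isolated edges). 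Confirming that the reductions stay inside such a closed family, rather than leaving the class, is the delicate point; once it is done, Theorem \ref{T2} yields $\reg(R/I(G))=a(G)$ in all five cases.
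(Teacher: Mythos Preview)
Your approach---verifying for each class that it admits a codominated vertex and is closed under $G\setminus x$ and $G\setminus N[x]$, then invoking Theorem \ref{T2}---is exactly the paper's. The paper's own proof is in fact terser than yours: it only asserts that each of the five classes possesses a codominated vertex (citing the proof of \cite[Theorem 3.2]{MMCRTY} for case (v)) and then appeals to Theorem \ref{T2} without explicitly discussing closure, so your treatment of closure in (i)--(iii) and your honest flagging of the delicacy in (iv)--(v) actually supply detail that the paper leaves to the reader.
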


\begin{proof}
If $G$ satisfies in $(i),(ii),(iii)$ and  $(iv)$, then it has a codominated vertex and also in case $(v)$ $G$ has a codominated vertex by proof of \cite[Theorem 3.2]{MMCRTY}. Thus by Theorem \ref{T2} the result follows.
\end{proof}

\subsection*{Acknowledgements}
We would like to thank H. Hassanzadeh for some useful examples. We also thank the referee for their meticulous comments that helped us to improve this manuscript.

%%%%%%%%%%%%%%%%%%%%%%%%%%%%%%%%%%%%%%%%%%%%%%%%%%%%%%%%%%%%%%%%%%%%%%%%%%%%%

\end{document}